\documentclass{amsart}
\usepackage{amssymb,mathrsfs,MnSymbol}%,skak}
\usepackage{color}

%%%%%%%%%%%%%%%%%%%%%%%%%%%%%%%%%%%%%%MACROS

\def\bB {\mathbf{B}}

\def\bE {\mathbf{E}}

\def\bP {\mathbf{P}}

\def\bR {\mathbf{R}}
\def\bS {\mathbf{S}}

\def\fS {\mathfrak{S}}

\def\cC {\mathcal{C}}

\def\cK {\mathcal{K}}

\def\cN {\mathcal{N}}

\def\cR {\mathcal{R}}
\def\cS {\mathcal{S}}
\def\cT {\mathcal{T}}

\def\cZ {\mathcal{Z}}

\def\b {{\beta}}

\def\Ga {{\Gamma}}
\def\de {{\delta}}
\def\eps {{\epsilon}}

\def\l {{\lambda}}
\def\L {{\Lambda}}
\def\si {{\sigma}}
\def\Si {{\Sigma}}

\def\Om {{\Omega}}

\def\d {{\partial}}
\def\grad {{\nabla}}

\def\rstr {{\big |}}
\def\indc {{\bf 1}}

\def\la {\langle}
\def\ra {\rangle}

\def \lA {\big\langle \! \! \big\langle}
\def \rA {\big\rangle \! \! \big\rangle}

%%%%%%%%%%%%%%%%%%%%%%%%%%%%%%%%%%%%COMMANDES

\newcommand{\Supp}{\operatorname{supp}}

\newcommand{\Dist}{\operatorname{dist}}

\newcommand{\ba}{\begin{aligned}}
\newcommand{\ea}{\end{aligned}}

\newcommand{\be}{\begin{equation}}
\newcommand{\ee}{\end{equation}}

\newcommand{\lb}{\label}

\newtheorem{Thm}{Theorem}[section]

\newtheorem{Lem}[Thm]{Lemma}

\begin{document}

\title[Boltzmann-Grad Limit for the Lorentz Gas]{The Boltzmann-Grad Limit for the Lorentz Gas with a Poisson Distribution of Obstacles}

\author[F. Golse]{Fran\c cois Golse}
\address[F.G.]{CMLS, \'Ecole polytechnique, 91128 Palaiseau Cedex, France}
\email{francois.golse@polytechnique.edu}

\begin{abstract}
In this note, we propose a slightly different proof of Gallavotti's theorem [``Statistical Mechanics: A Short Treatise'', Springer, 1999, pp. 48--55] on the derivation of the linear Boltzmann equation for the Lorentz gas with a Poisson distribution
of obstacles in the Boltzmann-Grad limit.
\end{abstract}

%\date{\today; PRELIMINARY DRAFT, NOT TO BE DIFFUSED}

\maketitle

\bigskip
\bigskip
\rightline{\it In memory of Prof. Robert T. Glassey (1946-2020)}

%%%%%%%%%%%%%%%%%%%%%%%%%%%%%%%%%%%%%%%%%%%%%%%%%%%%%%%%%%%%%%%%%%%%%%%%%%%%%%%%%%%%%%%%%%%%%%%%%%%%%%%%%%%%%%%%%%%%%%%%%%

\section{Introduction}

%%%%%%%%%%%%%%%%%%%%%%%%%%%%%%%%%%%%%%%%%%%%%%%%%%%%%%%%%%%%%%%%%%%%%%%%%%%%%%%%%%%%%%%%%%%%%%%%%%%%%%%%%%%%%%%%%%%%%%%%%%

In 1905, Lorentz proposed to describe the motion of electrons in a metal by means of a linear Boltzmann equation (equation \eqref{LinBoltzF} below) satisfied by the electron distribution function. The argument in \cite{Lorentz} leading to the 
Lorentz kinetic model closely follows the classical reasoning leading to the Boltzmann equation in the kinetic theory of gases. The essential difference is that the Lorentz collision integral (i.e. the right hand side of \eqref{LinBoltzF}) is linear, 
at variance with the Boltzmann collision integral, which is quadratic.

However, the discussion in \cite{Lorentz} is by no means a rigorous derivation of the Lorentz kinetic model. The first rigorous derivation of the Lorentz equation is due to Gallavotti \cite{Ga69,Ga72} (with \cite{Ga72} remaining unpublished, until
it was included in the book \cite{Gallavotti}). In \cite{Ga72} or \cite{Gallavotti}, the electrons are specularly reflected at the surface of spherical obstacles whose centers are distributed at random and define a Poisson point process in $\bR^3$.
Assuming that the size $\eps>0$ of each obstacle is small, while the number $N$ of obstacles per unit volume is large so that $N\eps^2$ tends to a positive number, which is the \textit{Boltzmann-Grad scaling}, Gallavotti proved in \cite{Ga72} 
or \cite{Gallavotti} that the expected electron distribution function converges to the solution of the Lorentz kinetic equation. Gallavotti's simple, but outstanding result was later generalized by Spohn \cite{Spohn} to more general distributions of 
obstacles. On the other hand, the almost sure (i.e. for almost every denumerable set of obstacle centers)  convergence of the electron distribution function was proved in \cite{Boldri}.

Gallavotti's proof proceeds as follows: for each denumerable set of obstacle centers, the electron distribution function is computed by the method of characteristics, then averaged in the set of obstacle centers (see formula (4.2) in \cite{Ga72}
or formula (1.A2.12) in \cite{Gallavotti}). Then one partitions the set of obstacles centers into configurations where each obstacle is hit only once by the electron, and configurations where at least one obstacle is hit at least twice by the electron 
(see formula (5.6) in \cite{Ga72}, or formula (1.A2.19) in \cite{Gallavotti}). Then one passes to the limit as $\eps$ (the obstacle radius) tends to zero in the expressions so obtained and proves that the contribution of configurations where each 
obstacle is hit only once converges to some expression in the form of a series (formula (5.11) in \cite{Ga72}, or formula (1.A2.23) in \cite{Gallavotti}. One easily recognizes in this series the explicit formula for the solution of the Lorentz kinetic 
equation \eqref{LinBoltzF} in terms of the initial distribution function obtained by iterating on the Duhamel formula (treating the Lorentz collision integral as a perturbation of the free transport equation). The contribution of recollisions is proved
negligible by the conservation of total mass, or particle number satisfied by the Lorentz kinetic equation \eqref{LinBoltzF}.

One remarkable feature of Gallavotti's argument is that the Lorentz equation \eqref{LinBoltzF} is deduced from the explicit formula giving its solution. This is somewhat disconcerting for a mathematician: indeed, solving differential equations 
by means of explicit formulas is quite exceptional. Besides, even if explicit solutions are available, it is usually simpler to deduce the qualitative features of these solutions from the equation they satisfy than from some complicated explicit
formula. 

This situation is far from exceptional in mathematical physics, and there are other fundamental equations which have been derived from a more or less explicit form of their solutions. Perhaps the most famous example is the Boltzmann
equation of the kinetic theory of gases itself. Its first rigorous derivation, due to Lanford \cite{Lanford} (see also \cite{CIP,GallaLSR} for more detailed presentations and generalizations of Lanford's result) follows exactly the same pattern
as Gallavotti's (simpler) derivation of the Lorentz kinetic equation. One shortcoming of this approach is that the time interval on which the Boltzmann equation is derived from Newton's second law written for each gas molecule is extremely
small (a fraction of the average time between two successive collisions involving the same gas molecule).

\smallskip
For this reason, we propose in this note a different approach to the problem of deriving the Lorentz kinetic equation. We shall \textit{never} manipulate any \textit{explicit} formula for the solution of the Lorentz kinetic equation \eqref{LinBoltzF} 
--- or for its Green function. We shall instead obtain the Green function for \eqref{LinBoltzF} as the unique limit point of a carefully designed (see section \ref{S-Green} below) family of Radon measures in the weak-* topology, by a functional 
analytic argument. This Green function is proved to satisfy an integral equation equivalent to the adjoint equation of \eqref{LinBoltzF} (in other words, it is a ``mild'' solution of the adjoint equation of \eqref{LinBoltzF}). 

Perhaps the most important ingredient in this derivation is an analogue of the property referred to as ``one-sided molecular chaos'' in chapter I, section 11 of \cite{Grad58} --- see also Sone's lucid presentation in Appendix A, section A1 of 
\cite{Sone2007} (especially the discussion between equation (A.5) on p. 485 and the Lemma on p. 492, and footnotes 12, 13, 14, of key importance for a good understanding of the foundations of the kinetic theory of gases). This analogue
of Grad's ``one-sided molecular chaos'' idea is presented and explained in detail in section \ref{S-J2} below.

In the end, we obtain exactly the same result as Gallavotti's in \cite{Ga72,Gallavotti}. As explained above, our purpose was certainly not to improve, or generalize Gallavotti's original derivation of the Lorentz equation, but to show that such
a derivation could be carried out by obtaining \textit{directly the integral equation} form of the Lorentz kinetic model \eqref{LinBoltzF}, without \textit{ever using the explicit solution} of \eqref{LinBoltzF}. Hopefully, this approach can be adapted
to derive other kinetic models.

\smallskip
The outline of this paper is as follows: section 2 presents the basic setting of the Lorentz gas, while section 3 reviews some material on the Poisson point process, and section 4 describes the billiard flow, i.e. the Lorentz gas dynamics before
passing to the vanishing $\eps$ limit. The material in sections 2-4 is standard in the theory of the Lorentz gas. Section \ref{S-Green} introduces the ``filtered'' Green function, and its decomposition, \eqref{DefJ1J2}, ultimately leading to the 
integral equation form of the Lorentz kinetic equation in the vanishing $\eps$ limit. The two terms in this decomposition are studied in sections 6 and \ref{S-J2} respectively. The adjoint equation satisfied by the Green function of \eqref{LinBoltzF}
is obtained in section 8, while section 9 contains the end of the proof of Gallavotti's theorem (Theorem \ref{T-Gallavotti} in section 3).

\medskip
Bob Glassey has been a leader in the field of kinetic equations. His work with W. Strauss on the Vlasov-Maxwell system \cite{GlaStr} is an important milestone on the most intriguing open problem in noncollisional kinetic theory, and has had 
a rich posterity. His monograph\cite{GlasseyBook} on the Cauchy problem in kinetic theory is a model of clarity, mathematical precision and elegance. The modest remarks on the foundations of kinetic theory in the present paper are dedicated 
to Bob Glassey's memory in recognition of his influence on the field of kinetic models, both as a researcher and as a mentor.

%%%%%%%%%%%%%%%%%%%%%%%%%%%%%%%%%%%%%%%%%%%%%%%%%%%%%%%%%%%%%%%%%%%%%%%%%%%%%%%%%%%%%%%%%%%%%%%%%%%%%%%%%%%%%%%%%%%%%%%%%%

\section{The Lorentz Gas}

%%%%%%%%%%%%%%%%%%%%%%%%%%%%%%%%%%%%%%%%%%%%%%%%%%%%%%%%%%%%%%%%%%%%%%%%%%%%%%%%%%%%%%%%%%%%%%%%%%%%%%%%%%%%%%%%%%%%%%%%%%

Let $d\ge 2$ be an integer,  and let $C$ be a denumerable subset of $\bR^d$. For $\eps>0$, consider the \textit{billiard table}
\be\lb{BilTable}
\Om_\eps(C):=\{x\in\bR^d\hbox{ s.t. }\Dist(x,C)>\eps\}\,,\quad \cZ_\eps(C):=\Om_\eps(C)\times\bS^{d-1}\,,
\ee
where $\bS^{n-1}$ designates the unit sphere $\{(x_1,\ldots,x_n)\in\bR^n\text{ s.t. }x_1^2+\ldots+x_n^2=1\}$.

If $B\subset\bR^d$, the function 
$$
\bR^d\ni x\mapsto\Dist(x,B)=\inf_{y\in B}\Dist(x,y)\in[0,+\infty)
$$
is easily seen\footnote{Indeed, for each  $y\in B$, one has
$$
\Dist(x_2,B)\le\Dist(x_2,y)\le\Dist(x_2,x_1)+\Dist(x_1,y)\,,
$$
and minimizing the right hand side in $y\in B$ implies that
$$
\Dist(x_2,B)\le\Dist(x_2,x_1)+\Dist(x_1,B)\,.
$$
Exchanging the roles of $x_1,x_2$ leads immediately to the $1$-Lipschitz continuity condition.} to be $1$-Lipschitz continuous, so that $\Om_\eps(C)$ is an open set of $\bR^d$ for each $C\subset\bR^d$ and each $\eps>0$.

If $B(0,R)\cap C$ is finite for all $R>0$, then $\d\Om_\eps(C)\cap B(0,R)$ is a piecewise $C^1$ submanifold of $\bR^d$ for all $R>0$. Indeed, 
$$
\d\Om_\eps(C)\cap B(0,R)=\d(\hbox{finite union of balls of radius $\eps$})\cap B(0,R)\,.
$$
More precisely, an inward unit normal is defined at each point $x\in\d\Om_\eps(C)$ such that 
$$
\#\{c\in C\text{ s.t. }x\in\d B(c,\eps)\}=1\,,
$$
i.e. except for finitely many $x\in\d\Om_\eps(C)$. If this is the case, we denote
\be\lb{Def1c(x)}
\{c_\eps(x)\}:=\{c\in C\text{ s.t. }x\in\d B(c,\eps)\}\,.
\ee

\medskip
The transport equation with specular reflection on $\d\Om_\eps(C)$ for all $C$ denumerable such that $B(0,R)\cap C$ is finite for each $R>0$, is the initial boundary value problem with unknown $f_\eps\equiv f_\eps(t,x,v;C)$ written below
\be\lb{TranspEq1}
\left\{
\ba
{}&\d_tf_\eps(t,x,v;C)+v\cdot\grad_xf_\eps(t,x,v;C)=0\,,&&\qquad x\in\Om_\eps(C)\,,
\\
&f_\eps(t,x,v;C)=f_\eps\left(t,x,v-2\left(v\cdot\tfrac{x-c_\eps(x)}\eps\right)\tfrac{x-c_\eps(x)}\eps;C\right)\,,&&\qquad \#(C\cap\d B(x,\eps))=1\,,
\\
&f_\eps(0,x,v;C)=f^{in}(x,v)\,,&&\qquad x\in\Om_\eps(C)\,.
\ea
\right.
\ee
It is natural to require that $B(0,R)\cap C$ is finite, so that $\d\Om_\eps(C)\cap B(0,R)$ is a piecewise $C^1$ submanifold of $\bR^d$ for all $R>0$. This is indeed the setting in which the initial boundary value problem above for the transport 
equation is well-posed in $L^p(\Om_\eps(C)\times\bS^{d-1})$ (see \cite{BardosThese} and chapter 1 of \cite{Agoshkov}). Later, we shall consider an even simpler situation. 

This problem satisfies the maximum principle: for all $t\ge 0$
\be\lb{MaxPrinc}
0\le f^{in}\le M\quad\hbox{ on }\cZ_\eps(C)\implies 0\le f_\eps(t,x,v;C)\le M\quad\hbox{ for a.e. }(x,v)\in\cZ_\eps(C)\,.
\ee

%%%%%%%%%%%%%%%%%%%%%%%%%%%%%%%%%%%%%%%%%%%%%%%%%%%%%%%%%%%%%%%%%%%%%%%%%%%%%%%%%%%%%%%%%%%%%%%%%%%%%%%%%%%%%%%%%%%%%%%%%%

\section{The Poisson Point Process}

%%%%%%%%%%%%%%%%%%%%%%%%%%%%%%%%%%%%%%%%%%%%%%%%%%%%%%%%%%%%%%%%%%%%%%%%%%%%%%%%%%%%%%%%%%%%%%%%%%%%%%%%%%%%%%%%%%%%%%%%%%

As recalled in the introduction, Gallavotti proves the Boltzmann-Grad limit for a Lorentz gas with spherical scatterers (obstacles) whose centers are distributed according to a Poisson point process in $\bR^3$. This section recalls some basic facts 
about the notion of \textit{Poisson point process}. Standard references for this topic are \cite{Kingman, Daley}; see also the more recent textbook \cite{Last}.

\smallskip
A \textit{Poisson point process} in $\bR^d$ with intensity $\l$ is a random denumerable set $C\subset\bR^d$ such that, for each Borel set $A\subset\bR^d$, the random variable\footnote{For each finite set $S$, we denote by $\# S$ the number of 
elements of $S$, and for each Borel, or Lebesgue measurable subset $A$ of $\bR^d$, we denote by $|A|$ its Lebesgue measure.} 
$$
\#(C\cap A)\text{ has Poisson distribution of parameter }\l |A|,
$$
and, for each $k$-tuple of pairwise disjoints Borel sets $A_1,\ldots,A_k\subset\bR^d$, the integer-valued random variables
$$
\#(C\cap A_1),\,\#(C\cap A_2),\ldots,\,\#(C\cap A_k)\text{ are independent.}
$$
Thus
\be\lb{Poisson1}
\bP(\#(C\cap A_j)=n_j\hbox{ for }j=1,\ldots,k)=\prod_{j=1}^ke^{-\l|A_j|}\frac{\l^{n_j}|A|^{n_j}}{n_j!}\,.
\ee
See section 2.1 in \cite{Kingman}, section 2.4 in \cite{Daley} or Definition 3.1 in \cite{Last}.

\smallskip
Gallavotti starts from a slightly different, but obviously related formula (formula (2.1) in \cite{Ga72}, or formula (1A2.1) in \cite{Gallavotti}). This formula gives the probability of finding $N$ points $c_1,\ldots,c_N$ of $C$ in a Borel subset $B$ of 
$\bR^d$, with $c_1,\ldots,c_N$ in infinitesimal volumes $dc_1,\ldots,dc_N$ centered at $c_1,\ldots,c_N$, which is
\be\lb{Poisson2}
\Pi_{N,B}(c_1,\ldots,c_N)dc_1\ldots dc_N=\frac{\l^Ne^{-\l|B|}}{N!}dc_1\ldots dc_N\,.
\ee
This is called the \textit{Janossy measure of order $N$} of the Poisson point process restricted to $B$: see Definition 4.6, Example 4.8 and formula (4.21) in \cite{Last}, or Definition 5.4 IV in \cite{Daley} (notice the difference in normalization
in these two references).

(Here is a quick informal argument explaining \eqref{Poisson2}: pick $A_1,\ldots,A_N$ pairwise disjoints Borel subsets of $B$, and observe that the probability of finding $N$ particles $c_1,\ldots,c_N$ in $B$, with $c_j\in A_j$ for $j=1,\ldots,N$, 
is exactly the probability of finding one element of $C$ in each $A_j$ for $j=1,\ldots,N$ and no element in $B\setminus (A_1\cup\ldots\cup A_N)$, divided by $N!$ to account for $c_j\in A_{\si(j)}$ with $j=1,\ldots,N$ and $\si\in\fS_N$. According
to formula \eqref{Poisson1}
$$
\ba
\bP(\#(C\cap A_j)=1\hbox{ for }j=1,\ldots,N\text{ and }\#(C\cap B\setminus (A_1\cup\ldots\cup A_N))=\varnothing)
\\
=e^{-\l(|B|-|A_1|-\ldots-|A_N|)}\prod_{j=1}^N\l|A_j|e^{-\l|A_j|}=e^{-\l|B|}\prod_{j=1}^N\l|A_j|
\ea
$$
and this implies formula \eqref{Poisson2}.)

\smallskip
Since\footnote{By the Taylor formula, for each $x\ge 0$ and each $n\ge 1$, 
$$
1-e^{-x}\sum_{k=0}^{n-1}\frac{x^k}{k!}=\int_0^x\frac{(x-t)^{n-1}}{(n-1)!}e^{t-x}dt\le\int_0^x\frac{(x-t)^{n-1}}{(n-1)!}dt=\frac{x^n}{n!}\,.
$$}
\be\lb{PCcapB>n}
\bP(\#(C\cap B(0,R))\ge n)=e^{-\l|B(0,R)|}\sum_{k\ge n}\frac{\l^k|B(0,R)|^k}{k!}\le\frac{\l^n|B(0,R)|^n}{n!}\to 0
\ee
as $n\to\infty$ for each $R>0$, then
$$
\bP(\#(C\cap B(0,N))=\infty)=0\quad\hbox{ for each integer }N>0\,.
$$
Hence
\be\lb{DefcN}
\bP(\cN)=0\,,\quad\hbox{ where }\cN:=\bigcup_{N\ge 1}\{C\subset\bR^d\hbox{ denumerable s.t. }\#(C\cap B(0,N))=\infty\}\,.
\ee
In particular, for all $C\notin\cN$, i.e. $\bP$-a.s., there are finitely many points of $C$ in the ball $B(0,R)$, so that, as explained above, $\d\Om_\eps(C)\cap B(0,R)$ is a piecewise $C^1$ submanifold of $\bR^d$ for all $R>0$.

\medskip
We shall consider below an even simpler situation. For each $\eps>0$, set 
\be\lb{DefcCepsR}
\cC(\eps,R):=\{C\subset\bR^d\text{ denumerable s.t. }c\not= c'\in C\cap B(0,R)\implies|c-c'|>3\eps\}\,.
\ee
Assume that $f^{in}=0$ a.e. on $B(0,R)^c\times\bS^{d-1}$, and that $C\in\cC(\eps,R+T)$. Then, one has $f_\eps(t,x,v)=g_\eps(t,x,v)$ for a.e. $(t,x,v)\in[0,T]\times B(0,R+T)\times\bS^{d-1}$ and for each $T>0$, where $g_\eps$ is the solution 
of the Cauchy problem
\be\lb{TranspEq2}
\left\{
\ba
{}&\d_tg_\eps(t,x,v;C)+v\cdot\grad_xg_\eps(t,x,v;C)=0\,,&&\quad x\in\Om_\eps(C)\cap B(0,R+T)\,,
\\
&g_\eps(t,x,v;C)=g_\eps\left(t,x,v-2\left(v\cdot\tfrac{x-c_\eps(x)}\eps\right)\tfrac{x-c_\eps(x)}\eps;C\right)\,,&&\quad C\cap\d B(x,\eps)=\{c_\eps(x)\}\,,
\\
&g_\eps(t,x,v;C)=0\,,&&\quad |x|=R+T\,,\,\,v\cdot x<0\,,
\\
&g_\eps(0,x,v;C)=f^{in}(x,v)\,,&&\quad x\in\Om_\eps(C)\,.
\ea
\right.
\ee

\smallskip
Notice that, if $C\in\cC(\eps,R+T)$, for each $x\in(\d\Om_\eps(C))\cap B(0,R+T)$ the condition 
\be\lb{Defc(x)}
C\cap\d B(x,\eps)=\{c_\eps(x)\}
\ee
defines $c_\eps(x)$ to be the unique point in $C$ such that $x$ lies on the surface of the \textit{unique} spherical obstacle centered at $c_\eps(x)$. The uniqueness of this obstacle follows from our assumption that $C\in\cC(\eps,R+T)$.

\smallskip
Observe that the spatial domain in this initial boundary value problem is the open set $\Om_\eps(C)\cap B(0,R+T)$. Now $(\d\Om_\eps(C))\cap B(0,R+T)$ is a disjoint union of spheres in $\bR^d$, and therefore a $C^1$ (even a $C^\infty$) 
submanifold of $\bR^d$ --- instead of piecewise smooth submanifold as in the case where $C\notin\cN$. In particular, a unit outward normal vector is uniquely defined at each point of $(\d\Om_\eps(C))\cap B(0,R+T)$ --- and not only at all 
but finitely many points of $(\d\Om_\eps(C))\cap B(0,R+T)$.

\medskip
Define the \textit{first collision time}: for each $(x,v)\in\cZ_\eps(C)$, set
\be\lb{Deftau1}
\tau^1_\eps((x,v);C):=\inf\{t>0\hbox{ s.t. }\Dist(x+tv,C)=\eps\}\,.
\ee
Thus, for each $t\ge 0$,
$$
\ba
\bP(\tau_\eps^1((x,v):C)\ge t)=\bP(C\cap([x,x+tv[+B(0,\eps))=\varnothing)
\\
=e^{-\l|[x,x+tv[+B(0,\eps)|}=e^{-\l\eps^{d-1}|\bB^{d-1}|t}&\,.
\ea
$$
(The notation $\bB^n$ designates the unit ball $\{(x_1,\ldots,x_n)\in\bR^n\text{ s.t. }x_1^2+\ldots+x_n^2\le 1\}$.)
The distribution of $\tau_\eps^1((x,v):C)$ is therefore given by the formula
$$
-d\bP(\tau_\eps^1((x,v),C)\ge t)=\l\eps^{d-1}|\bB^{d-1}|e^{-\l\eps^{d-1}|\bB^{d-1}|t}dt\,.
$$

\smallskip
This formula is at the origin of the \textit{Boltzmann-Grad scaling}:
\be\lb{BGScal}
\l\eps^{d-1}=1\,.
\ee
Defining
\be\lb{Defsi}
\si:=|\bB^{d-1}|\,,
\ee
we see that the distribution of $\tau_\eps^1((x,v),C)$ satisfies
\be\lb{Disttau1}
-d\bP(\tau_\eps^1((x,v),C)\ge t)\to\si e^{-\si t}dt
\ee
in the limit as $\eps\to 0$. 

With this scaling, the \textit{expected number of obstacles per unit volume} is large and tends to infinity as $\eps\to 0$, since
$$
\frac{\bE(\#(C\cap B(0,R)))}{|B(0,R)|}=\frac1{|B(0,R)|}\sum_{n\ge 0}ne^{-\l|B(0,R)|}\frac{\l^n|B(0,R)|^n}{n!}=\l=\eps^{1-d}\,.
$$
On the other hand, the \textit{expected volume fraction} occupied by the obstacles is small and vanishes as $\eps\to 0$, since
$$
\frac{\bE(\#(C\cap B(0,R)))}{|B(0,R)|}|\bB^d|\eps^d=|\bB^d|\eps\,.
$$

\smallskip
After these preliminaries, we can state Gallavotti's theorem.

\begin{Thm}[Gallavotti]\lb{T-Gallavotti}
Let $f^{in}\in C_c(\bR^d\times\bS^{d-1})$ satisfy $f^{in}\ge 0$, and set
$$
F_\eps(t,x,v):=\bE\left[f_\eps(t,x,v;C)\indc_{(x,v)\in\overline{\cZ_\eps(C)}}\right]\,.
$$
Then $F_\eps\to F\text{ weakly-* in }L^\infty([0,+\infty)\times\bR^d\times\bS^{d-1})$ as $\eps\to 0$, where $F$ is the solution of the linear Boltzmann equation
\be\lb{LinBoltzF}
\left\{\ba
{}&(\d_t+v\cdot\grad_x)F(t,x,v)=\tfrac12\int_{\bS^{d-1}}\left(F(t,x,v-2(v\cdot\nu)\nu)-F(t,x,v)\right)|v\cdot\nu|d\nu\,,
\\
&F(0,x,v)=f^{in}(x,v)\,.
\ea
\right.
\ee
\end{Thm}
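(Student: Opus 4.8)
\medskip
\noindent\emph{Plan of proof.}
The plan is to never use any explicit solution formula for \eqref{LinBoltzF}: instead I would produce its Green function directly, as the unique weak-$*$ limit point of a bounded family of Radon measures built from the billiard dynamics, and then check that this limit point is a mild solution of the adjoint equation of \eqref{LinBoltzF}. First I would reduce to a clean situation. Fix $T>0$ and $R>0$ with $\Supp f^{in}\subset B(0,R)\times\bS^{d-1}$; on $[0,T]$ the solution stays supported in $B(0,R+T)$, so by the discussion around \eqref{TranspEq2} one has $f_\eps=g_\eps$ on $[0,T]\times B(0,R+T)\times\bS^{d-1}$ as soon as $C\in\cC(\eps,R+T)$. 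By \eqref{Poisson1} the complementary event (two obstacle centres within $3\eps$ inside $B(0,R+T)$, see \eqref{DefcCepsR}) has probability $O(\eps^{d-1})\to0$, so by the maximum principle \eqref{MaxPrinc} one may replace $F_\eps$ by $\bE[g_\eps(t,x,v;C)\indc_{C\in\cC(\eps,R+T)}]$ up to an error tending to $0$ in $L^1([0,T]\times B(0,R+T)\times\bS^{d-1})$. For such configurations the backward billiard flow $(X_\eps,V_\eps)(-s,x,v;C)$ is well defined for $s\in[0,T]$ and $g_\eps(t,x,v;C)=f^{in}\big((X_\eps,V_\eps)(-t,x,v;C)\big)$.

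\medskip
Next I would introduce the filtered Green measures of section~\ref{S-Green}: for test functions $\chi\in C_c([0,T)\times\bR^d\times\bS^{d-1})$ and $\phi\in C_c(\bR^d\times\bS^{d-1})$ set
$$
\langle G_\eps,\chi\otimes\phi\rangle:=\int \chi(t,x,v)\,\bE\Big[\phi\big((X_\eps,V_\eps)(-t,x,v;C)\big)\indc_{C\in\cC(\eps,R+T)}\Big]\,\dd t\,\dd x\,\dd v ,
$$
possibly after an additional mollification in $(x,v)$ (this is the ``carefully designed'' part). Since the laws of the flow endpoints are sub-probability measures with spatial support in a fixed ball, the $G_\eps$ form a family of Radon measures on $[0,T]\times(\bR^d\times\bS^{d-1})^2$ with uniformly bounded total variation; by Banach--Alaoglu there are a sequence $\eps_n\to0$ and a Radon measure $G$ with $G_{\eps_n}\wtost G$. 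The whole point is then to identify $G$.

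\medskip
To do so I would decompose $G_\eps=J_1[\chi\otimes\phi]_\eps+J_2[\chi\otimes\phi]_\eps$ as in \eqref{DefJ1J2}, by conditioning on whether and when the backward trajectory undergoes its first collision and, inside the collided part, by isolating the ``Markovian'' contribution obtained after conditioning on the direction $\nu$ from the electron to the centre of the first obstacle met. On the leading term I would use the defining property of the Poisson point process: conditionally on one obstacle centre, the remaining centres still form a Poisson process of the same intensity, independent of the tube already swept by the trajectory --- this is the analogue of Grad's one-sided molecular chaos explained in section~\ref{S-J2}. Letting $\eps\to0$ and using the Boltzmann--Grad scaling \eqref{BGScal}, the first-collision-time law \eqref{Disttau1}, and the asymptotic uniformity of the impact parameter, the leading term converges (with $\si$ as in \eqref{Defsi}) to the right-hand side of the Duhamel, or mild, formulation of the \emph{adjoint} of \eqref{LinBoltzF}, an expression in which $G$ itself reappears linearly. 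For the remainder $J_2$ --- collecting trajectories that re-meet, or pass within $O(\eps)$ of, an already visited obstacle, together with the non-Markovian correction to the conditioning --- I would show it tends to $0$: for a fixed trajectory, a recollision within time $T$ forces a later obstacle centre into a tube of cross-section $O(\eps^{d-1})$ aimed back at a ball of radius $O(\eps)$, an event of Janossy measure $O(\eps)$ by \eqref{Poisson2}; combined with the uniform $L^\infty$ bound this gives $J_2[\chi\otimes\phi]_\eps\to0$. I expect this recollision estimate --- the classical hard step common to Gallavotti's and Lanford's arguments --- to be the main obstacle, together with the bookkeeping needed to make the filtering genuinely produce bounded Radon measures on which all these limits commute.

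\medskip
Finally, the previous two steps show that \emph{every} weak-$*$ limit point $G$ solves one and the same linear integral equation, namely the mild form of the adjoint of \eqref{LinBoltzF} (section~8). A Gronwall, or contraction-mapping, argument on $[0,T]$ gives uniqueness of its solution, so $G$ is independent of the extracted subsequence and $G_\eps\wtost G$. Testing $F_\eps$ against $\chi\in C_c$ then gives $\langle F_\eps,\chi\rangle=\langle G_\eps,\chi\otimes f^{in}\rangle+o(1)\to\langle G,\chi\otimes f^{in}\rangle=:\langle F,\chi\rangle$, where $F$ is, by duality, the bounded mild solution of \eqref{LinBoltzF}; since $0\le F_\eps\le\|f^{in}\|_{L^\infty}$ by \eqref{MaxPrinc}, this convergence upgrades to weak-$*$ convergence in $L^\infty([0,+\infty)\times\bR^d\times\bS^{d-1})$, and $F$ is the solution of \eqref{LinBoltzF}. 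As $T>0$ is arbitrary, this proves Theorem~\ref{T-Gallavotti}.
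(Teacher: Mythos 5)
Your plan correctly identifies the main ingredients --- a filtered Green measure, conditioning on the first obstacle (one-sided molecular chaos), a renewal identity, weak-$*$ compactness, and uniqueness for the mild adjoint problem --- but it misreads the role of the paper's $J_1/J_2$ split and proposes a genuinely different, and harder, treatment of recollisions than the paper actually uses. In the paper, $J_1$ is the no-collision contribution and $J_2$ is the at-least-one-collision contribution; $J_2$ does \emph{not} tend to zero. After conditioning on $c^1_\eps$, the exclusion factor $\prod_{k\ge2}\indc_{c^1_\eps\ne c^k_\eps}$ is shown to drop out \emph{exactly} a.e.\ (because, conditionally on $c^1_\eps$, the remaining Poisson centres never coincide with the fixed point $c^1_\eps$), so $J_2$ is identically the exact renewal term $\int_0^t\int_{\bB^{d-1}}\si e^{-\si\tau}G^{R,T}_\eps(t-\tau+0,\cS[\tau,h]z,\cdot)\,u(dh)\,d\tau$ of \eqref{Fla3J2}, with no $o(1)$ remainder. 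There is no direct recollision estimate anywhere in the paper, and no vanishing ``$J_2$''.

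The gap in your plan is the claim $\la F_\eps,\chi\ra=\la G_\eps,\chi\otimes f^{in}\ra+o(1)$: this \emph{is} the recollision estimate, the classical hard step the paper is expressly designed to avoid. Your tube heuristic would have to account carefully for the aiming constraint, for chains of several intermediate obstacles, and for the borderline logarithmic divergence when $d=2$, and you give no proof. The paper sidesteps all of this with a mass-conservation squeeze (section~\ref{S-Final}): because of the pointwise bound \eqref{Ineq-mu} the filtered Green measure is a \emph{lower} bound for the true one, so $F\ge\int f^{in}(y)\,\Ga^{R,T}(t,y,\cdot)\,dy$ in the limit \eqref{F>}; on the other hand the billiard dynamics yields $\int F\,dz\le\int f^{in}\,dz$ \eqref{intF<}, while $\Ga^{R,T}(t,y,\cdot)$ is a probability measure because \eqref{LinBoltzGa} conserves total mass. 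Squeezing gives equality, with no recollision estimate whatsoever. This squeeze is the key mechanism of the paper and it is the ingredient missing from your proposal; you should replace your tube count by it.
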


%%%%%%%%%%%%%%%%%%%%%%%%%%%%%%%%%%%%%%%%%%%%%%%%%%%%%%%%%%%%%%%%%%%%%%%%%%%%%%%%%%%%%%%%%%%%%%%%%%%%%%%%%%%%%%%%%%%%%%%%%%

\section{The Billiard Flow}

%%%%%%%%%%%%%%%%%%%%%%%%%%%%%%%%%%%%%%%%%%%%%%%%%%%%%%%%%%%%%%%%%%%%%%%%%%%%%%%%%%%%%%%%%%%%%%%%%%%%%%%%%%%%%%%%%%%%%%%%%%

Assume that $\Supp(f^{in})\subset B(0,R)\times\bS^{d-1}$ and assume that $C\in\cC(\eps,R+T)$. Then, for each $t\in[0,T]$ and each $z\in\cZ_\eps(C)\cap(B(0,R)\times\bS^{d-1})$,
\be\lb{FlaChar}
f_\eps(t,Z_\eps(t,z;C);C)=f^{in}(z)\,,
\ee
where $Z_\eps(t,z;C)$ is defined by the following prescription.

\smallskip
Denote the phase-space points as $z:=(x,v)\in\bR^d\times\bS^{d-1}$; the free flow acting on the phase-space is 
\be\lb{FreeFlow}
z\mapsto z+t\cT z=(x+tv,v)\,,\qquad\hbox{ with }\cT:=\left(\begin{matrix} 0\,&I_{\bR^d}\\ 0&0\end{matrix}\right)\,.
\ee
For each $x\in\d\Om_\eps(C)\cap B(0,R+T)$, we define $c_\eps(x)\in C$ as in \eqref{Defc(x)}, and set
\be\lb{DefS}
S_\eps(x,v;C):=\left(x,v-2\left(v\cdot\tfrac{x-c_\eps(x)}\eps\right)\tfrac{x-c_\eps(x)}\eps\right)\,.
\ee

\smallskip
For $C\in\cC(\eps,R+T)$, the \textit{billiard flow} is defined on $\cZ_\eps(C)\cap(B(0,R)\times\bS^{d-1})$ for all $t\in[0,T]$ by the following prescription:
\be\lb{DefZ}
\ba
{}&Z_\eps(0,z;C)=z\,,\quad\qquad\qquad\hbox{ for all }z\in\cZ_\eps(C)\cap(B(0,R)\times\bS^{d-1})\,,
\\
&Z_\eps(s,z;C)\!=\!z\!+\!s\cT z\,,\quad\qquad\hbox{ if }0\!<\!s\!\le\!\tau^1_\eps(z;C)\wedge T\text{ and }z\in\cZ_\eps(C)\cap(B(0,R)\times\bS^{d-1})\,,
\\
&Z_\eps(s,z;C)=Z_\eps(s-\tau^1_\eps(z;C)+0,S_\eps(z+\tau^1_\eps(z;C)\cT z;C);C)\,,\qquad\hbox{ if }\tau^1_\eps(z;C)<\!s\!\le T.
\ea
\ee
Henceforth, we denote
$$
Z_\eps(s,z;C):=(X_\eps,V_\eps)(s,x,v;C)\,.
$$

\smallskip
One easily checks the following facts:

\noindent
(a) for all $0\le s\le t\le T$ and all $z\in\cZ_\eps(C)$, one has 
$$
z\in B(0,R+s)\times\bS^{d-1}\implies Z_\eps(t-s,z;C)\in B(0,R+t)\times\bS^{d-1}\,;
$$
(b) for each $C\in\cC(\eps,R+T)$, 
$$
c,c'\in C\cap B(0,R+T)\text{ and }\d B(c,\eps)\cap\d B(c',\eps)\not=\varnothing\implies c=c'\,.
$$
In particular, $\#C\cap\d B(x,\eps)\le1$ for each $x\in B(0,R+T)$ and each $C\in\cC(\eps,R+T)$;

\noindent
(c)  for each $C\in\cC(\eps,R+T)$ and each $z=(x,v)\in\cZ_\eps(C)\cap(B(0,R+T-t)\times\bS^{d-1})$, one has
\be\lb{DefN}
\#\{s\in[0,t]\text{ s.t. }\Dist(X_\eps(s,x,v;C),C)=\eps\}=:N_\eps(z,t;C)<\infty\,.
\ee

\medskip
For each $C\in\cC(\eps,R+T)$, each $z=(x,v)\in\cZ_\eps(C)\cap(B(0,R+s)\times\bS^{d-1})$ and each $0\le s\le t\le T$, denote  
$$
0<\tau_\eps^1(z;C)<\tau_\eps^2(z;C)<\ldots<\tau_\eps^{N_\eps(z,t-s;C)}(z;C)\le t-s
$$
the finite set of collision times defined recursively by the formula
\be\lb{Deftauj}
\tau_\eps^{j+1}(z;C)=\tau_\eps^1(Z_\eps(\tau_\eps^j(z;C)+0,z;C);C)\,,\qquad j=1,\ldots,N_\eps(z,t-s;C)-1\,.
\ee
Likewise, set
\be\lb{Defcj}
c^j_\eps(z;C):=c_\eps(X_\eps(\tau_\eps^j(z;C),z;C))\,,\qquad 1\le j\le N_\eps(z,t-s;C)
\ee
to be the finite sequence of centers of colliding balls in $[0,t-s]$.

Observe that, since $C\in\cC(\eps,R+T)$,
$$
\tau_\eps^{j+1}(z;C)-\tau_\eps^j(z;C)>\eps\qquad\text{ for all }j=1,\ldots,N_\eps(z,t-s;C)-1\,.
$$
In particular, this explains why the set of collision times in $[0,T-s]$ is finite for all $z\in\cZ_\eps(C)\cap(B(0,R+s)\times\bS^{d-1})$ and all $s\in[0,T]$ if $C\in\cC(\eps,R+T)$.

%%%%%%%%%%%%%%%%%%%%%%%%%%%%%%%%%%%%%%%%%%%%%%%%%%%%%%%%%%%%%%%%%%%%%%%%%%%%%%%%%%%%%%%%%%%%%%%%%%%%%%%%%%%%%%%%%%%%%%%%%%

\section{The Filtered Green Function}\lb{S-Green}

%%%%%%%%%%%%%%%%%%%%%%%%%%%%%%%%%%%%%%%%%%%%%%%%%%%%%%%%%%%%%%%%%%%%%%%%%%%%%%%%%%%%%%%%%%%%%%%%%%%%%%%%%%%%%%%%%%%%%%%%%%

For each $t\in[0,T]$ and $z\in\overline{B(0,R+T-t)}\times\bS^{d-1}$, define the measure-valued function $G^{R,T}_\eps(t,z,d\zeta)$ on $\overline{B(0,R+T)}\times\bS^{d-1}$ by the formula
\be\lb{DefGeps}
\ba
\int_{\overline{B(0,R+T)}\times\bS^{d-1}}\phi(\zeta)G^{R,T}_\eps(t,z,d\zeta)
\\
:=\int_{\cC(\eps,R+T)}\phi(Z_\eps(t,z;C))\indc_{z\in\overline{\cZ_\eps(C)}}\prod_{1\le j<k\le N_\eps(z,t;C)}\indc_{c^j_\eps(z,C)\not=c^k_\eps(z,C)}\bP(dC)
\ea
\ee
for all $\phi\in C(\overline{B(0,R+T)}\times\bS^{d-1})$. In other words, setting
\be\lb{Defmueps}
\mu_\eps^{R,T}[t,z;C]:=\de_{Z_\eps(t,z;C)}\indc_{z\in\overline{\cZ_\eps(C)}}\indc_{C\in\cC(\eps,R+T)}\L_\eps(1,N_\eps(z,t;C);z;C)\,,
\ee
where
\be\lb{DefLambda}
\L_\eps(m,n;z;C):=\prod_{m\le j<k\le n}\indc_{c^j_\eps(z;C)\not=c^k_\eps(z;C)}\,,
\ee
one has
$$
G^{R,T}_\eps(t,z,\cdot)=\bE\left(\mu_\eps^{R,T}[t,z;C]\right)\,.
$$

\smallskip
One can think of $\de_{Z_\eps(t,z;C)}\indc_{z\in\overline{\cZ_\eps(C)}}$, or of $\de_{Z_\eps(t,z;C)}\indc_{z\in\overline{\cZ_\eps(C)}}\indc_{C\in\cC(\eps,R+T)}$ as the Green functions for \eqref{TranspEq1} and \eqref{TranspEq2} respectively.
However $\mu_\eps^{R,T}[t,z;C]$ involves the additional factor $\L_\eps(1,N_\eps(z,t;C);z;C)$ which depends not only on $Z_\eps(t,z;C)$, but on $Z_\eps[s,z;C]$ for all $s\in[0,t]$. In particular, $\mu_\eps^{R,T}[t,z;C]$ is not the Green function
of a first order PDE. Nevertheless, it obviously satisfies the inequality
\be\lb{Ineq-mu}
\mu_\eps^{R,T}[t,z;C]\le\de_{Z_\eps(t,z;C)}\indc_{z\in\overline{\cZ_\eps(C)}}\indc_{C\in\cC(\eps,R+T)}\,.
\ee
We shall use this inequality in section \ref{S-Final} to conclude the proof of Theorem \ref{T-Gallavotti}. 

\smallskip
Let us explain the reason for considering $\mu_\eps^{R,T}[t,z;C]$, or its expected value $G^{R,T}_\eps(t,z,\cdot)$. The factor $\L_\eps(1,N_\eps(z,t;C);z;C)$ eliminates all the configurations $C$ of obstacles such that the path $Z_\eps(s,z;C)$
hits the \textit{same} obstacle \textit{at least twice} in the time interval $[0,t]$. In other words, $\L_\eps(1,N_\eps(z,t;C);z;C)$ eliminates \textit{recollisions}, which are known to complicate all derivations of kinetic equations from particle systems. 
Notice however a minor point: to conveniently eliminate recollisions, we must first restrict our attention to obstacle configurations where the minimal distance between two obstacles in $\overline{B(0,R+T)}$ is positive --- in fact, is at least $\eps>0$.

\smallskip
Henceforth, we call $G_\eps^{R,T}$ the ``filtered Green function''. We shall see that the filtered Green function converges on $(0,+\infty)\times(\bR^d\times\bS^{d-1})^2$ in the sense of distributions as $\eps\to 0$, and seek an integral equation 
satisfied by its limit.

\smallskip
Observe that, for each $t\in[0,T]$, one has
\be\lb{DefJ1J2}
\ba
G^{R,T}_\eps(t,z,\cdot)=&\bE\left(\mu_\eps^{R,T}[t,z;C]\indc_{t<\tau_\eps^1(z;C)}\right)+\bE\left(\mu_\eps^{R,T}[t,z;C]\indc_{\tau_\eps^1(z;C)\le t}\right)
\\
=&J_1+J_2\,.
\ea
\ee
The sought integral equation for the filtered Green function in the vanishing $\eps$ limit follows precisely from this decomposition.

\smallskip
In the next two sections, we compute $J_1$ and $J_2$.

%%%%%%%%%%%%%%%%%%%%%%%%%%%%%%%%%%%%%%%%%%%%%%%%%%%%%%%%%%%%%%%%%%%%%%%%%%%%%%%%%%%%%%%%%%%%%%%%%%%%%%%%%%%%%%%%%%%%%%%%%

\section{Computing $J_1$}

%%%%%%%%%%%%%%%%%%%%%%%%%%%%%%%%%%%%%%%%%%%%%%%%%%%%%%%%%%%%%%%%%%%%%%%%%%%%%%%%%%%%%%%%%%%%%%%%%%%%%%%%%%%%%%%%%%%%%%%%%

If $t<\tau_\eps^1(z;C)$, then $Z_\eps(t,z;C)=(I+t\cT)z$ and $N_\eps(z,t;C)=0$, so that 
$$
\L_\eps(1,0;z;C)=\prod_{(j,k)\in\varnothing}\indc_{c^j_\eps(z;C)\not=c^k_\eps(z;C)}=1\,.
$$
Thus
$$
J_1=\de_{z+t\cT z}\bE\left(\indc_{z\in\overline{\cZ_\eps(C)}}\indc_{t<\tau_\eps^1(z;C)}\indc_{C\in\cC(\eps,R+T)}\right)\,.
$$

We have seen in \eqref{Disttau1} that
$$
\bE\left(\indc_{t<\tau_\eps^1(z;C)}\right)=\bP(\tau_\eps^1(z;C)>t)=e^{-\si t}\qquad\text{ as }\eps\to 0\,.
$$
On the other hand
\be\lb{E1tau>t}
\ba
0\le&\bE\left(\indc_{t<\tau_\eps^1(z;C)}\right)-\bE\left(\indc_{z\in\overline{\cZ_\eps(C)}}\indc_{t<\tau_\eps^1(z;C)}\indc_{C\in\cC(\eps,R+T)}\right)
\\
\le&\bE\left(1-\indc_{z\in\overline{\cZ_\eps(C)}}\right)+\bE\left(1-\indc_{C\in\cC(\eps,R+T)}\right)\,.
\ea
\ee

First 
$$
\indc_{z\in\overline{\cZ_\eps(C)}}=\indc_{x\in\overline{\Om_\eps(C)}}=\indc_{\overline{B(x,\eps)}\cap C=\varnothing}
$$
so that, arguing as in \eqref{PCcapB>n},
\be\lb{E1-1Z}
\bE\left(1-\indc_{z\in\overline{\cZ_\eps(C)}}\right)\le\l |B(x,\eps)|=\eps^{1-d}|\bB^d|\eps^d=|\bB^d|\eps\to 0\,,
\ee
where $\bB^d$ is the closed unit ball of $\bR^d$.

The following limit is slightly more involved.

\begin{Lem}\lb{L-1-1C}
Assume (without loss of generality) that $R+T>3$. Then
$$
\bE\left(1-\indc_{C\in\cC(\eps,R+T)}\right)\to 0\quad\text{ as }\eps\to 0\,.
$$
\end{Lem}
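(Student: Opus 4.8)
The plan is to show that the probability of the \emph{bad} event — that two distinct obstacle centers lie within distance $3\eps$ of each other inside $B(0,R+T)$ — vanishes as $\eps\to 0$. Write $\rho:=R+T$ for brevity. By definition of $\cC(\eps,\rho)$,
$$
1-\indc_{C\in\cC(\eps,\rho)}=\indc_{\exists\,c\not=c'\in C\cap B(0,\rho)\text{ with }|c-c'|\le 3\eps}\,.
$$
First I would get rid of the possibility of infinitely many obstacles, which is already handled by \eqref{DefcN}: $\bP$-a.s. the set $C\cap B(0,\rho)$ is finite, so on the complement of the null set $\cN$ we may freely speak of the pairwise distances between the finitely many points of $C$ in $B(0,\rho)$. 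Thus it suffices to bound the expectation of the indicator above.

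The natural tool is a first-moment (union) bound over pairs, using the structure of the Poisson point process. Cover $B(0,\rho)$ by finitely many — say $M(\eps)$ — balls of radius $\eps$, with $M(\eps)\le K_d\,\rho^d\eps^{-d}$ for a dimensional constant $K_d$ (a standard volumetric packing estimate). If two distinct centers of $C$ lie within distance $3\eps$ in $B(0,\rho)$, then there is a ball $B_i$ of radius $\eps$ in this cover such that the enlarged ball $\widetilde B_i$ of radius, say, $8\eps$ concentric with $B_i$ contains \emph{at least two} points of $C$. Hence, by the union bound,
$$
\bE\left(1-\indc_{C\in\cC(\eps,\rho)}\right)\le\sum_{i=1}^{M(\eps)}\bP\bigl(\#(C\cap\widetilde B_i)\ge 2\bigr)\,.
$$
Each $\widetilde B_i$ has Lebesgue measure $|\widetilde B_i|=|\bB^d|(8\eps)^d=8^d|\bB^d|\eps^d$, so by the Poisson tail estimate proved in footnote/eq.\ \eqref{PCcapB>n} (applied with $n=2$ and with $\l=\eps^{1-d}$ per the Boltzmann-Grad scaling \eqref{BGScal}),
$$
\bP\bigl(\#(C\cap\widetilde B_i)\ge 2\bigr)\le\frac{\l^2|\widetilde B_i|^2}{2}=\frac{\eps^{2-2d}\cdot 8^{2d}|\bB^d|^2\eps^{2d}}{2}=\tfrac12\,8^{2d}|\bB^d|^2\,\eps^2\,.
$$
Combining, $\bE(1-\indc_{C\in\cC(\eps,\rho)})\le M(\eps)\cdot\tfrac12 8^{2d}|\bB^d|^2\eps^2\le C_d'\,\rho^d\,\eps^{2-d}$, which does \emph{not} obviously tend to $0$ when $d\ge 2$.

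So the crude union bound is the main obstacle, and the fix is to use the \emph{independence} of the Poisson process on disjoint sets together with the fact that a coincidence forces a \emph{second} point in a small ball, not merely one. The cleaner route is to estimate directly via the Janossy/Mecke-type reasoning: the expected number of \emph{ordered} pairs of distinct points $(c,c')$ of $C$ with $c,c'\in B(0,\rho)$ and $|c-c'|\le 3\eps$ equals, by the Mecke formula for the Poisson process,
$$
\l^2\int_{B(0,\rho)}\int_{B(0,\rho)}\indc_{|c-c'|\le 3\eps}\,\dd c'\,\dd c\le\l^2\,|B(0,\rho)|\,\cdot|\bB^d|(3\eps)^d=\eps^{2-2d}\cdot|\bB^d|\rho^d\cdot 3^d|\bB^d|\eps^d=3^d|\bB^d|^2\rho^d\,\eps^{2-d}\,,
$$
which still fails for $d\ge 2$. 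I now recognize that the scaling genuinely conspires against a naive second-moment argument, and the correct observation — the one I would build the proof on — is that the event in Lemma \ref{L-1-1C} is about \emph{any} pair being close, so we should instead localize more aggressively: partition $B(0,\rho)$ into cubes of side $\eps$ (there are $\le C_d\rho^d\eps^{-d}$ of them), and note that a violating pair lies in the union of one such cube and its finitely many ($3^d$) neighbours, hence in a cube $Q$ of side $3\eps$ containing $\ge 2$ points of $C$; then $\bP(\#(C\cap Q)\ge 2)\le\tfrac12\l^2|Q|^2=\tfrac12\eps^{2-2d}(3\eps)^{2d}=\tfrac12 3^{2d}\eps^2$, and the union bound over $\le C_d\rho^d\eps^{-d}$ cubes gives the same $O(\eps^{2-d})$ — so the obstruction is real and intrinsic to this approach. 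The resolution must therefore exploit a sharper Poisson estimate: $\bP(\#(C\cap Q)\ge 2)=1-e^{-\l|Q|}(1+\l|Q|)$, and here $\l|Q|=\eps^{2-2d}\cdot\eps^{2d}\cdot 3^{2d}$... which is not small. The honest conclusion is that the argument I would actually carry out is the following: the quantity $\bE(1-\indc_{C\in\cC(\eps,\rho)})$ is the probability that \emph{some} pair of Poisson points in a fixed ball is within $3\eps$; since the Poisson process in $B(0,\rho)$ has a.s.\ finitely many points, and the pairwise distances are a.s.\ all positive, this probability is continuous in $\eps$ and tends to $\bP(\exists\,c\not=c'\in C\cap B(0,\rho),\ |c-c'|=0)=0$ by dominated convergence. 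The hard (and the only genuinely delicate) point is justifying the interchange of limit and expectation: I would condition on $\#(C\cap B(0,\rho))=N$, on which event the $N$ points are i.i.d.\ uniform on $B(0,\rho)$ by the Janossy description \eqref{Poisson2}, so $\bP(\text{bad}\mid N)\le\binom N2|\bB^d|(3\eps)^d/|B(0,\rho)|\to 0$ for each fixed $N$; then sum against the Poisson weights $e^{-\l|B(0,\rho)|}(\l|B(0,\rho)|)^N/N!$ and use dominated convergence in $N$, noting that the weight $\l=\eps^{1-d}$ grows but the per-$N$ bound $\binom N2\cdot\text{const}\cdot\eps^d/\rho^d$ still tends to $0$ for fixed $N$ while the conditional probability is always $\le 1$ — and the tail $\sum_{N\ge N_0}$ of the Poisson weights is controlled uniformly. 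I expect the bookkeeping in this last dominated-convergence step, keeping track of the $\eps$-dependence of $\l$, to be where the real work lies.
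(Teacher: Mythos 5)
Your proposal has a genuine gap at the final dominated-convergence step, and your own repeated diagnosis---that the $O(\eps^{2-d})$ obstruction is ``real and intrinsic''---is, in the end, not escaped by the fix you propose. Write $\rho=R+T$ and $a:=\l|B(0,\rho)|=\eps^{1-d}|\bB^d|\rho^d$. Since $a\to\infty$ as $\eps\to 0$, the Poisson weights $e^{-a}a^N/N!$ concentrate near $N\approx a\sim\eps^{1-d}$, and for such $N$ your conditional union bound is $\tfrac12N(N-1)(3\eps/\rho)^d\sim\tfrac12\,3^d|\bB^d|^2\rho^d\,\eps^{2-d}$, which is $\gtrsim 1$ for $d\ge 2$; this is exactly the first-moment quantity you had already rejected, reappearing for the dominant $N$. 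Hence $\sum_Ne^{-a}\tfrac{a^N}{N!}\min\bigl(\tfrac12N(N-1)(3\eps/\rho)^d,1\bigr)$ is close to $1$, not to $0$: the $\min$ saturates on the bulk of the Poisson mass. The tail claim ``$\sum_{N\ge N_0}$ is controlled uniformly'' is also false---$\sum_{N\ge N_0}e^{-a}a^N/N!\to 1$ for every fixed $N_0$, precisely because $a\to\infty$. So the dominated convergence you are relying on is not merely ``bookkeeping''; it fails.

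For comparison, the paper also conditions on $N$ (this is what the Janossy expansion \eqref{Poisson2} accomplishes), but replaces your additive union bound $p_N\ge 1-\tfrac12N(N-1)(3\eps/\rho)^d$ by the multiplicative recursion $\Xi_N\ge\Xi_{N-1}|\bB^d|\bigl(\rho^d-(N-1)(3\eps)^d\bigr)$, i.e.\ $p_N\ge\prod_{j=1}^{N-1}\bigl(1-j(3\eps/\rho)^d\bigr)$, which stays positive over the much longer range $N\lesssim(\rho/3\eps)^d$, and then truncates the Poisson sum at $m=[\eps^{-d}]$. You should, however, also scrutinize the paper's penultimate inequality $1-e^{-\l\b}\sum_{N=0}^{m-1}(\l\b(1-m\eta))^N/N!\le(\l\b(1-m\eta))^m/m!$, where $\eta=(3\eps/\rho)^d$ and $\b=|\bB^d|\rho^d$: the footnote estimate requires the argument of the exponential prefactor to match the base of the powers in the sum, and the mismatch factor $e^{-\l\b m\eta}$, with $\l\b m\eta\sim 3^d|\bB^d|\,\eps^{1-d}\to\infty$ for $d\ge 2$, is not innocuous. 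Your observation that the expected number of pairs of Poisson points in $B(0,\rho)$ at mutual distance $\le 3\eps$ is of order $\eps^{2-d}$---bounded away from $0$ for $d=2$, divergent for $d\ge 3$---points at something real; a Chen--Stein-type estimate then suggests the probability of at least one such pair does \emph{not} vanish as $\eps\to0$, and this tension with the statement of the Lemma deserves further thought.
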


\begin{proof}
Indeed, according to formula \eqref{Poisson2},
$$
\ba
\bE\left(\indc_{C\in\cC(\eps,R+T)}\right)=&\bE\left(\indc_{C\cap B(0,R+T)=\varnothing}\right)+\bE\left(\indc_{\#(C\cap B(0,R+T))=1}\right)
\\
&+\sum_{N\ge 2}\bE\left(\indc_{C\cap B(0,R+T)=\{c_1,\ldots,c_N\}}\prod_{1\le j<k\le N}\indc_{|c_j-c_k|>3\eps}\right)
\\
=&e^{-\l|B(0,R+T)|}+\l|B(0,R+T)|e^{-\l|B(0,R+T)|}
\\
&+\sum_{N\ge 2}\int_{B(0,R+T)^N}\prod_{1\le j<k\le N}\indc_{|c_j-c_k|>3\eps}\Pi_{N,B}(c_1,\ldots,c_N)dc_1\ldots dc_N
\\
=&e^{-\l|B(0,R+T)|}\left(1+\l|B(0,R+T)|+\sum_{N\ge 2}\frac{\l^N\Xi_N}{N!}\right)
\ea
$$
where
$$
\Xi_N:=\int_{B(0,R+T)^N}\prod_{1\le j<k\le N}\indc_{|c_j-c_k|>3\eps}dc_1\ldots dc_N\,.
$$
Observe that
$$
\Xi_N=\int_{B(0,R+T)^{N-1}}\prod_{1\le j<k\le N-1}\indc_{|c_j-c_k|>3\eps}\left(\int_{B(0,R+T)}\prod_{l=1}^{N-1}\indc_{|c_l-c_N|>3\eps}dc_N\right)dc_1\ldots dc_{N-1}
$$
and that
$$
\prod_{l=1}^{N-1}\indc_{|c_l-c_N|>3\eps}\ge 1-\sum_{l=1}^{N-1}\indc_{|c_l-c_N|\le3\eps}\,,
$$
so that
$$
\int_{B(0,R+T)}\left(1-\sum_{l=1}^{N-1}\indc_{|c_l-c_N|\le3\eps}\right)dc_N=|\bB^d|((R+T)^d-(N-1)(3\eps)^d)\,.
$$
Therefore
$$
\Xi_N\ge \Xi_{N-1}|\bB^d|((R+T)^d-(N-1)(3\eps)^d)\,.
$$
Hence
$$
\ba
1\ge\bE\left(\indc_{C\in\cC(\eps,R+T)}\right)\ge&e^{-\l|B(0,R+T)|}\sum_{N\ge 0}\frac{\l^N|\bB^d|^N(R+T)^{Nd}}{N!}\prod_{j=1}^{N-1}\left(1-j\tfrac{(3\eps)^d}{(R+T)^d}\right)
\\
\ge&e^{-\l|B(0,R+T)|}\sum_{N\ge 0}\frac{\l^N|\bB^d|^N(R+T)^{Nd}}{N!}\left(1-N\frac{(3\eps)^d}{(R+T)^d}\right)^N
\ea
$$
Denoting for simplicity
$$
\eta:=\left(\tfrac{3\eps}{R+T}\right)^d\,,\quad\b:=|\bB^d|(R+T)^d\,,
$$
one has
$$
1\ge\bE\left(\indc_{C\in\cC(\eps,R+T)}\right)\ge e^{-\l\b}\sum_{N=0}^{m-1}\frac{(\l\b(1-m\eta))^N}{N!}
$$
so that \footnote{Use footnote 3 above and the elementary inequality $n!\ge n^ne^{-n}$ for each integer $n\ge 1$. Indeed, 
$$
n!=n^n\prod_{k=0}^{n-1}(1-\tfrac{k}n)\,,\quad\text{ and }\ln\prod_{k=0}^{n-1}(1-\tfrac{k}n)=\sum_{k=0}^{n-1}\ln(1-\tfrac{k}n)\ge n\int_0^1\ln(1-x)dx=-1
$$
since the function $x\mapsto\ln(1-x)$ is nonincreasing on $(0,1)$.}
$$
\ba
1-\bE\left(\indc_{C\in\cC(\eps,R+T)}\right)\le&1-e^{-\l\b}\sum_{N=0}^{m-1}\frac{(\l\b(1-m\eta))^N}{N!}
\\
\le&\frac{(\l\b(1-m\eta))^m}{m!}\le\frac{(\l\b(1-m\eta))^m}{m^me^{-m}}=(\l\b e(\tfrac1m-\eta))^m
\ea
$$
Assume without loss of generality that $R+T>3$ so that $0<\eta<\eps^d$, and let us choose $m:=[\eps^{-d}]$. Then we find that
$$
(\l\b e(\tfrac1m-\eta))^m\le(\b e(\tfrac{\eps}{\eps^dm}))^m\le(\b e(\tfrac{\eps}{1-\eps^d}))^{\eps^{-d}-1}\to 0\text{ as }\eps\to 0\,.
$$
\end{proof}

\smallskip
Hence, using Lemma \ref{L-1-1C} with \eqref{E1tau>t} and \eqref{E1-1Z} shows that
\be\lb{LimJ1}
J_1\to e^{-\si t}\de_{z+t\cT z}\quad\text{ in total variation as }\eps\to 0^+\,,
\ee
according to \eqref{Disttau1}.
 
%%%%%%%%%%%%%%%%%%%%%%%%%%%%%%%%%%%%%%%%%%%%%%%%%%%%%%%%%%%%%%%%%%%%%%%%%%%%%%%%%%%%%%%%%%%%%%%%%%%%%%%%%%%%%%%%%%%%%%%%%

\section{Computing $J_2$}\lb{S-J2}

%%%%%%%%%%%%%%%%%%%%%%%%%%%%%%%%%%%%%%%%%%%%%%%%%%%%%%%%%%%%%%%%%%%%%%%%%%%%%%%%%%%%%%%%%%%%%%%%%%%%%%%%%%%%%%%%%%%%%%%%%

The key idea in the computation of $J_2$ is to use conditioning on the \textit{first} obstacle. Notice that this idea of conditioning on the first obstacle is the key by which the integral equation (2.9), equivalent to the linear Boltzmann equation 
(2.10), is deduced from the transport process in (2.3) in \cite{Papanico}

\subsection{Conditioning on the first obstacle}
%%%%%%%%%%%%%%%%%%%%%%%%%%%%%%%%%%%%%%%%%%%%%%%%%%%%%%%%%%%%%%%%%%%%%%%%%%%%%%%%%%%%%%%%%%%%%%%%%%%%%%%%%%%%%%%%%%%%%%%%%

In other words, we use the identity
\be\lb{Fla1J2}
J_2=\bE\left(\indc_{\tau_\eps^1(z;C)\le t}\bE\left(\mu_\eps^{R,T}[t,z,C]|c_\eps^1(z;C)\right)\right)\,.
\ee
Observe that, for $C\in\cC(\eps,R+T)$, and for each $z\in\overline{\cZ_\eps(C)}\cap(\overline{B(0,R+T-t)}\times\bS^{d-1})$ such that $\tau_\eps^1(z;C)\le t\le T$, one has
$$
\ba
Z_\eps(t,z;C)=Z_\eps(t-\tau_\eps^1(z;C)+0,S_\eps(z+\tau_\eps^1(z;C)\cT z;C);C)\,,
\\
N_\eps(z,t;C)-1=N_\eps(S_\eps(z+\tau_\eps^1(z;C)\cT z;C),t-\tau_\eps^1(z;C);C)\,,%;S_\eps(z+\tau_\eps^1(z;C)\cT z;C)\,,
\ea
$$
while $Z_\eps(t,z;C)\in\overline{\cZ_\eps(C)}$ for all $t\in[0,T]$ if $z\in\overline{\cZ_\eps(C)}\cap(\overline{B(0,R+T-t)}\times\bS^{d-1})$. Hence
\be\lb{mu-t-shift}
\ba
\mu_\eps^{R,T}[t,z;C]
\\
=\mu_\eps^{R,T}[t-\tau_\eps^1(z;C)+0,S_\eps(z+\tau_\eps^1(z;C)\cT z;C);C]\prod_{k=2}^{N_\eps(z,t;C)}\indc_{c^1_\eps(z;C)\not=c^k_\eps(z;C)}&\,.
\ea
\ee
Moreover $\mu_\eps^{R,T}[t\!-\!\tau_\eps^1(z;C)\!+\!0,S_\eps(z\!+\!\tau_\eps^1(z;C)\cT z;C);C]$ depends only on $c_\eps^j(z;C)$ for $j=2,\ldots,N_\eps(z,t;C)$, and none of this points is equal to $c_\eps^1(z;C)$ unless
$$
\prod_{k=2}^{N_\eps(z,t;C)}\!\!\indc_{c^1_\eps(z;C)\not=c^k_\eps(z;C)}=0\,.
$$

\smallskip
This observation is the exact analogue of Grad's notion of one-sided molecular chaos  in chapter I, section 11 of \cite{Grad58}, and in Appendix A, section A1 of \cite{Sone2007}. In other words, $\mu_\eps^{R,T}[t,z;C]$ depends on $c^1_\eps(z;C)$
only in the following ways:

\smallskip
\noindent
(a) through the presence of $\tau_\eps^1(z;C)$ and of $c_\eps^1(x)$ in the transformation $S_\eps$ defined in \eqref{DefS}, which both appear in the argument of $\mu_\eps^{R,T}$ on the right hand side of \eqref{mu-t-shift}, and

\noindent
(b) through the exclusion factor 
$$
\prod_{k=2}^{N_\eps(z,t;C)}\indc_{c^1_\eps(z;C)\not=c^k_\eps(z;C)}\,,
$$
of negligible statistical weight as $\eps\to 0$.

\smallskip
Let us first dispose of the exclusion factor mentioned in (b). For each $(s,y)$ such that $s\in[0,T]$ and $y\in\overline{B(0,R+T-s)}\times\bS^{d-1}$,
$$
\ba
\bE\left(\mu_\eps^{R,T}[t,z,C]|c_\eps^1(z;C)\right)
\\
=\bE\left(\!\mu_\eps^{R,T}[t\!-\!\tau_\eps^1(z;C)\!+\!0,S_\eps(z\!+\!\tau_\eps^1(z;C)\cT z;C);C]\!\!\prod_{k=2}^{N_\eps(z,t;C)}\!\!\indc_{c^1_\eps(z;C)\not=c^k_\eps(z;C)}\Big|c_\eps^1(z;C)\!\right)
\\
=\lim_{\eta\to 0^+}\bE\Bigg(\mu_\eps^{R,T}[t\!-\!\tau_\eps^1(z;C)\!+\!0,S_\eps(z\!+\!\tau_\eps^1(z;C)\cT z;C);C]
\\
\times\prod_{k=1}^{N_\eps(S_\eps(z+\tau_\eps^1(z;C)\cT z;C),t-\tau_\eps^1(z;C);C)}\!\!\indc_{|c^1_\eps(z;C)-c^k_\eps(S_\eps(z\!+\!\tau_\eps^1(z;C)\cT z;C);C)|>\eta}\Bigg|c_\eps^1(z;C)\Bigg)
\\
=\lim_{\eta\to 0^+}\sum_{M\ge 1}\int_{\overline{B(0,R+T)}^{M-1}}\mu_\eps^{R,T}[s\!+\!0,y;C]\!\prod_{k=1}^{N_\eps(y,s;C)}\!\!\indc_{|c^1_\eps(y,C)-c^k_\eps(y;C)|>\eta}
\\
\times\indc_{N_\eps(y,s;C)\le M-1}\Pi_{M-1,B(0,R+T)\setminus B(c^1_\eps(y,C),\eta)}(c_2,\ldots,c_M)dc_2\ldots dc_M\Bigg|_{s=t\!-\!\tau_\eps^1(z;C)\atop y=S_\eps(z\!+\!\tau_\eps^1(z;C)\cT z;C)}&.
\ea
$$

Now, for a.e. $(s,y)$ such that $s\in[0,T]$ and $y\in\overline{B(0,R+T-s)}\times\bS^{d-1}$ and each $c_1\in B(0,R+T)$, one has 
$$
\ba
\lim_{\eta\to 0^+}\int_{\overline{B(0,R+T)}^{M-1}}\mu_\eps^{R,T}[s\!+\!0,y;C]\!\prod_{k=1}^{N_\eps(y,s;C)}\!\!\indc_{|c_1-c^{k}_\eps(y;C)|>\eta}
\\
\times\indc_{N_\eps(y,s;C)\le M-1}\Pi_{M-1,B(0,R+T)\setminus B(c_1,\eta)}(c_2,\ldots,c_M)dc_2\ldots dc_M
\\
=\int_{\overline{B(0,R+T)}^{M-1}}\mu_\eps^{R,T}[s\!+\!0,y;C]\!\prod_{k=1}^{N_\eps(y,s;C)}\!\!\indc_{c_1\not=c^{k}_\eps(y;C)}
\\
\times\indc_{N_\eps(y,s;C)\le M-1}\Pi_{M-1,B(0,R+T)\setminus\{c_1\}}(c_2,\ldots,c_M)dc_2\ldots dc_M
\\
=\int_{\overline{B(0,R+T)}^{M-1}}\mu_\eps^{R,T}[s\!+\!0,y;C]\!\indc_{N_\eps(y,s;C)\le M-1}\Pi_{M-1,B(0,R+T)}(c_2,\ldots,c_M)dc_2\ldots dc_M
\\
=\bE(\mu_\eps^{R,T}[s\!+\!0,y;C])=G_\eps^{R,T}(s+0,y,\cdot)&\,.
\ea
$$
(To prove the three equalities above, we proceed as follows. The monotone convergence theorem implies the first equality. Then, the second and third equalities follow from the fact that
$$
\Pi_{M-1,B(0,R+T)\setminus\{c_1\}}(c_2,\ldots,c_M)=\Pi_{M-1,B(0,R+T)}(c_2,\ldots,c_M)
$$
for all $c_1\in B(0,R+T)$, as can be seen from \eqref{Poisson2} since $\{c_1\}$ has measure $0$, while
$$
\int_{\overline{B(0,R+T-s)}\times\bS^{d-1}}\mu_\eps^{R,T}[s\!+\!0,y;C]dy\le 1
$$
since $0\le\mu_\eps^{R,T}[s\!+\!0,y;C]\le\de(y-Z_\eps(s+0,y;C))$ according to \eqref{Defmueps}, and
$$
\ba
c^{k}_\eps(y;C)\in\{c_2,\ldots,c_M\}\text{ for }k=1,\ldots,N_\eps(y,s;C)
\\
\implies\prod_{k=1}^{M-1}\indc_{c_1\not=c_k}\le\prod_{k=1}^{N_\eps(y,s;C)}\!\!\!\!\!\indc_{c_1\not=c^{k}_\eps(y;C)}\le 1&\,.
\ea
$$
Therefore
$$
\ba
\int_0^T\int_{\overline{B(0,R+T-s)}\times\bS^{d-1}}\int_{\overline{B(0,R+T)}^{M-1}}\mu_\eps^{R,T}[s\!+\!0,y;C]\left(1-\prod_{k=1}^{N_\eps(y,s;C)}\indc_{c_1\not=c^{k}_\eps(y;C)}\right)
\\
\times\indc_{N_\eps(y,s;C)\le M-1}\Pi_{M-1,B(0,R+T)}(c_2,\ldots,c_M)dc_2\ldots dc_Mdyds
\\
\le\int_{\overline{B(0,R+T)}^{M-1}}\left(1-\prod_{m=2}^{M}\indc_{c_1\not=c_m}\right)\Pi_{M-1,B(0,R+T)}(c_2,\ldots,c_M)dc_2\ldots dc_M
\\
\times\int_0^T\int_{\overline{B(0,R+T-s)}\times\bS^{d-1}}\mu_\eps^{R,T}[s\!+\!0,y;C]dyds
\\
\le T\int_{\overline{B(0,R+T)}^{M-1}}\left(1-\prod_{m=2}^{M}\indc_{c_1\not=c_m}\right)\Pi_{M-1,B(0,R+T)}(c_2,\ldots,c_M)dc_2\ldots dc_M&=0\,,
\ea
$$
where the last equality follows from the formula \eqref{Poisson2} since
$$
\prod_{m=2}^{M}\indc_{c_1\not=c_m}=1\quad\text{ for a.e. }(c_2,\ldots,c_M)\in\bR^{d(M-1)}\,.
$$
Hence
$$
\ba
\int_{\overline{B(0,R+T)}^{M-1}}\mu_\eps^{R,T}[s\!+\!0,y;C]\left(1-\prod_{k=1}^{N_\eps(y,s;C)}\indc_{c_1\not=c^{k}_\eps(y;C)}\right)
\\
\times\indc_{N_\eps(y,s;C)\le M-1}\Pi_{M-1,B(0,R+T)}(c_2,\ldots,c_M)dc_2\ldots dc_M&=0
\ea
$$
for a.e. $(s,y)$ such that $s\in[0,T]$ and $y\in\overline{B(0,R+T-s)}\times\bS^{d-1}$, since the integral of this nonnegative quantity in $(s,y)$ is equal to $0$.)

\smallskip
Hence, by monotone convergence in the series above, 
$$
\ba
\bE\left(\mu_\eps^{R,T}[t,z,C]|c_\eps^1(z;C)\right)
\\
=\bE\left(\mu_\eps^{R,T}[t\!-\!\tau_\eps^1(z;C)\!+\!0,S_\eps(z\!+\!\tau_\eps^1(z;C)\cT z;C);C]\!\prod_{k=2}^{N_\eps(z,t;C)}\!\!\indc_{c^1_\eps(z;C)\not=c^k_\eps(z;C)}\Big|c_\eps^1(z;C)\right)
\\
=\bE\Bigg(S_\eps(I\!+\!\tau_\eps^1(\cdot;C)\cT;C)\#\mu_\eps^{R,T}[t\!-\!\tau_\eps^1(z;C)\!+\!0,z;C);C]
\\
\times\prod_{k=1}^{N_\eps(S_\eps(z+\tau_\eps^1(z;C)\cT z;C),t-\tau_\eps^1(z;C);C)}\!\!\indc_{c^1_\eps(z;C)\not=c^k_\eps(S_\eps(z\!+\!\tau_\eps^1(z;C)\cT z;C);C)}\Bigg|c_\eps^1(z;C)\Bigg)
\\
=G_\eps^{R,T}(t\!-\!\tau_\eps^1(z;C)\!+\!0,S_\eps(z\!+\!\tau_\eps^1(z;C)\cT z;C),\cdot)&\,.
\ea
$$
At this point, we have proved that the contribution of exclusion factor
$$
\prod_{k=2}^{N_\eps(z,t;C)}\indc_{c^1_\eps(z;C)\not=c^k_\eps(z;C)}\,,
$$
is indeed of negligible statistical weight as $\eps\to 0$.

\smallskip
Therefore, we conclude from \eqref{Fla1J2} that
\be\lb{Fla2J2}
\ba
J_2=&\bE\left(\indc_{\tau_\eps^1(z;C)\le t}\bE\left(\mu_\eps^{R,T}[t,z,C]|c_\eps^1(z;C)\right)\right)
\\
=&\bE(G_\eps^{R,T}(t\!-\!\tau_\eps^1(z;C)\!+\!0,S_\eps(z\!+\!\tau_\eps^1(z;C)\cT z;C),\cdot))\,.
\ea
\ee

\subsection{Averaging over the first obstacle}
%%%%%%%%%%%%%%%%%%%%%%%%%%%%%%%%%%%%%%%%%%%%%%%%%%%%%%%%%%%%%%%%%%%%%%%%%%%%%%%%%%%%%%%%%%%%%%%%%%%%%%%%%%%%%%%%%%%%%%%%%

To compute this expectation, we first need the distribution of $c_\eps^1(z;C)$. Observe that
$$
c_\eps^1(z;C)=x+\tau_\eps^1(z;C)v+\eps\sqrt{1-|h|^2}v-\eps h\,,\quad\text{ with }h\perp v\text{ and }|h|\le 1\,.
$$
In the literature on kinetic models, the length of the vector $h\in\bB^{d-1}$ is usually referred to as the \textit{impact parameter}. Hence 
$$
\ba
\text{distribution of }c_\eps^1(z;C)=&\text{distribution of }\tau_\eps^1(z;C)\otimes\text{uniform distribution on }\bB^{d-1}
\\
=&\si e^{-\si t}dt\otimes\text{ uniform distribution on }\bB^{d-1}\,.
\ea
$$

Recall that
$$
\ba
S_\eps(x+\tau_\eps^1(z;C)v,v;C)\\
\\
=\left(x+\tau_\eps^1(z;C)v,v-2\left(v\cdot\tfrac{x+\tau_\eps^1(z;C)v-c_\eps(x+\tau_\eps^1(z;C)v)}\eps\right)\tfrac{x+\tau_\eps^1(z;C)v-c_\eps(x+\tau_\eps^1(z;C)v)}\eps\right)
\\
=\left(x+\tau_\eps^1(z;C)v,\left(1-2\sqrt{1-|h|^2}\right)v+2\sqrt{1-|h|^2}h\right)&\,.
\ea
$$
Hence, denoting by $u$ the uniform probability measure on $\bB^d$,
\be\lb{Fla3J2}
J_2=\int_0^t\int_{\bB^{d-1}}\si e^{-\si\tau}G_\eps^{R,T}(t\!-\!\tau\!+\!0,\cS[\tau,h]z,\cdot)u(dh)d\tau\,,
\ee
where
\be\lb{DefcS}
\cS[\tau,h](x,v)=\left(x+\tau v,\left(2|h|^2-1\right)v+2\sqrt{1-|h|^2}h\right)\,.
\ee

Equivalently
$$
\cS[\tau,h](x,v)=\left(x+\tau v,\cR[\nu]v)\right)\,,
$$
with
$$
\cR[\nu]w:=w-2(w\cdot\nu)\nu\quad\text{ and }\quad\nu:=h-\sqrt{1-|h|^2}v\in\bS^{d-1}\,.
$$
In other words,
$$
\nu=\frac{x+\tau_\eps^1(z;C)v-c_\eps^1(z;C)}\eps
$$
i.e. $\nu$ is the unit inward normal vector of $\d\Om_\eps(C)$ at $x+\tau_\eps^1(z;C)v$ (the first collision point). In particular, $\cS[\tau,h]$ is invertible, and 
$$
\cS[\tau,h]^{-1}(y,w)=(y-\tau\cR[\nu]w,\cR[\nu]w)\,.
$$
In terms of the normal field $\nu$ instead of the impact parameter $h$, one has
\be\lb{Fla4J2}
J_2=\tfrac1{|\bB^{d-1}|}\int_0^t\int_{\bS^{d-1}}\si e^{-\si\tau}G_\eps^{R,T}(t\!-\!\tau\!+\!0,x+\tau v,\cR[\nu]v,\cdot)(v\cdot\nu)_-d\nu d\tau\,.
\ee

%%%%%%%%%%%%%%%%%%%%%%%%%%%%%%%%%%%%%%%%%%%%%%%%%%%%%%%%%%%%%%%%%%%%%%%%%%%%%%%%%%%%%%%%%%%%%%%%%%%%%%%%%%%%%%%%%%%%%%%%%

\section{The Integral Equation for the Green Function}

%%%%%%%%%%%%%%%%%%%%%%%%%%%%%%%%%%%%%%%%%%%%%%%%%%%%%%%%%%%%%%%%%%%%%%%%%%%%%%%%%%%%%%%%%%%%%%%%%%%%%%%%%%%%%%%%%%%%%%%%%

Thus, the equality
$$
G^{R,T}_\eps(t,z,\cdot)=J_1+J_2
$$
can be recast as
\be\lb{IntEqGeps}
G^{R,T}_\eps(t,z,\cdot)-\int_0^t\int_{\bB^{d-1}}\si e^{-\si\tau}G_\eps^{R,T}(t\!-\!\tau\!+\!0,\cS[\tau,h]z,\cdot)u(dh)d\tau=J_1\to e^{-\si t}\de_{z+t\cT z}
\ee
in total variation as $\eps\to 0^+$, for $z=(x,v)$ with $|x|+t\le R+T$ and $t\in[0,T]$.

\smallskip
Next we pass to the limit as $\eps\to 0$ in the left hand side of \eqref{IntEqGeps}.

By construction, $G^{R,T}_\eps(t,z,\cdot)\ge 0$  and $\Supp(G^{R,T}_\eps(t-s,z,\cdot))\subset\overline{B(0,R+t)}\times\bS^{d-1}$ for $0\le s<t\le T$ and $z\in\overline{B(0,R+s)}\times\bS^{d-1}$, and
$$
\int_{\bR^d\times\bS^{d-1}}G_\eps^{R,T}(t,z,d\zeta)=\bE\left(\indc_{C\in\cC(\eps,R+T)}\L_\eps(1,N_\eps(z,t;C);z;C)\right)\le 1\,.
$$

Let 
\be\lb{DefcK}
\cK(R,T):=\{(t,x,v)\in[0,T]\times\bR^d\times\bS^{d-1}\text{ s.t. }|x|\le R+t\}\,,
\ee
and let $\Ga^{R,T}\ge 0$ be a weak-* limit point of the family $G^{R,T}_\eps$ in the space of Radon measures on $\cK(R,T)\times(\overline{B(0,R+T)}\times\bS^{d-1})$. 

For each $g\in C(\cK(R,T))$ and each $\phi\in C(\overline{B(0,R+T)}\times\bS^{d-1})$, one has
\be\lb{IntEq1Ga}
\lA\Ga^{R,T},g\otimes\phi\rA=\int_{\cK(R,T)}e^{-\si t}g(t,z)\phi(z+t\cT z)dzdt+\lA\Ga^{R,T},\Si g\otimes\phi\rA\,,
\ee
where
$$
g\otimes\phi(t,z,\zeta)=g(t,z)\phi(\zeta)\,,
$$
and
$$
\Si g(t,z):=\int_t^T\!\!\!\!\int_{|h|\le 1}\!\!\si e^{-\si(s-t)}\tilde g(s,\cS(s\!-\!t,h)^{-1}z)u(dh)dt\,,
$$
and where $\tilde g$ denotes the extension of $g$ by $0$ in $[0,T]\times\bR^d\times\bS^{d-1}\setminus\cK(R,T)$.

Choosing $\phi\equiv 1$ in the identity above shows that, for each $g\in C(\cK(R,T)$, the measure $m$ defined by
$$
\la m,g\ra:=\la\Ga^{R,T}(t,z,\cdot),g\otimes 1\ra
$$
satisfies
$$
\la m,g\ra=\int_{\cK(R,T)}e^{-\si t}g(t,z)dzdt+\la m,\Si g\ra\,.
$$
In other words, $m$ solves the Cauchy problem 
$$
\left\{\ba
{}&(\d_t-v\cdot\grad_x+\si)m(t,x,v)=\tfrac{\si}{2|\bB^{d-1}|}\int_{\bS^{d-1}}m(t,x,v-2(v\cdot\nu)\nu)|v\cdot\nu|d\nu\,,
\\
&m\rstr_{t=0}=1\,,
\ea
\right.
$$
whose only solution is the measure with density (abusively) denoted
$$
m(t,z)=1\,,\qquad\text{ for a.e. }(t,z)\in[0,T]\,.
$$
Thus $(t,z)\mapsto\Ga^{R,T}$ is a measurable function on $\cK(R,T)$ with values in the set of Borel probability measures on $\overline{B(0,R+T)}$ such that
\be\lb{IntEq2Ga}
\ba
\la\Ga^{R,T}(t,z,\cdot),\phi\ra=&e^{-\si t}\phi(z+t\cT z)
\\
&+\int_0^t\si e^{-\si(t-s)}\int_{|h|\le 1}\la\Ga^{R,T}(s,\cS(t-s,h)z,\cdot),\phi\ra u(dh)ds
\ea
\ee
for each $\phi\in C(\overline{B(0,R+T)})$. In other words, $\Ga^{R,T}(t,z,d\zeta)$ is the measure-valued solution of the Cauchy problem
\be\lb{LinBoltzGa}
\left\{\ba
{}&(\d_t-v\cdot\grad_x+\si)\Ga^{R,T}(t,x,v,\cdot)=\tfrac{\si}{2|\bB^{d-1}|}\int_{\bS^{d-1}}\Ga^{R,T}(t,x,v-2(v\cdot\nu)\nu,\cdot))|v\cdot\nu|d\nu\,,
\\
&\Ga^{R,T}(0,z,\cdot)=\de_z\,.
\ea
\right.
\ee
By the uniqueness of the solution of the Cauchy problem for the linear Boltzmann equation, one concludes that $G^{R,T}_\eps\to\Ga^{R,T}$ as $\eps\to 0$ in the space of Radon measures on $\cK(R,T)\times\overline{B(0,R+T)}$ 
for the weak-* topology.

%%%%%%%%%%%%%%%%%%%%%%%%%%%%%%%%%%%%%%%%%%%%%%%%%%%%%%%%%%%%%%%%%%%%%%%%%%%%%%%%%%%%%%%%%%%%%%%%%%%%%%%%%%%%%%%%%%%%%%%%%

\section{Passing to the Limit in $f_\eps$}\lb{S-Final}

%%%%%%%%%%%%%%%%%%%%%%%%%%%%%%%%%%%%%%%%%%%%%%%%%%%%%%%%%%%%%%%%%%%%%%%%%%%%%%%%%%%%%%%%%%%%%%%%%%%%%%%%%%%%%%%%%%%%%%%%%

We have seen that, if $0\le f^{in}\in C(\bR^d\times\bS^{d-1})$ with $\Supp(f^{in})\subset B(0,R)\times\bS^{d-1}$ and $C\in\cC(\eps,R+T)$, then, for each $t\in[0,T]$ and each $z\in\cZ_\eps(C)\cap(B(0,R)\times\bS^{d-1})$, one has
$$
f_\eps(t,Z_\eps(t,z;C);C)=f^{in}(z)\,.
$$
Thus, for each $0\le\phi\in C(\overline{B(0,R+T)}\times\bS^{d-1})$, one has
$$
\ba
\int_{\bR^d\times\bS^{d-1}}\phi(z)\bE\left(f_\eps(t,z,C)\indc_{z\in\overline{\cZ_\eps(C)}}\right)dz
\\
\ge\int_{\bR^d\times\bS^{d-1}}\int_{\cC(\eps,R+T)}f_\eps(t,z;C)\phi(z)\indc_{z\in\overline{\cZ_\eps(C)}}\bP(dC)dz
\\
=\int_{\bR^d\times\bS^{d-1}}\int_{\cC(\eps,R+T)}f_\eps(t,Z_\eps(t,z;C);C)\phi(Z_\eps(t,z;C))\indc_{z\in\overline{\cZ_\eps(C)}}\bP(dC)dz
\\
=\int_{\bR^d\times\bS^{d-1}}f^{in}(z)\int_{\cC(\eps,R+T)}\phi(Z_\eps(t,z;C))\indc_{z\in\overline{\cZ_\eps(C)}}\bP(dC)dz
\\
\ge\int_{\bR^d\times\bS^{d-1}}f^{in}(z)\int_{\cC(\eps,R+T)}\phi(Z_\eps(t,z;C))\indc_{z\in\overline{\cZ_\eps(C)}}\L_\eps(1,N_\eps(z,t;C);z;C)\bP(dC)dz
\\
=\int_{\bR^d\times\bS^{d-1}}f^{in}(z)\int_{\bR^d\times\bS^{d-1}}\phi(\zeta)G^{R,T}_\eps(t,z,d\zeta)dz&\,.
\ea
$$
(This inequality is equivalent to \eqref{Ineq-mu}, already observed when defining the filtered Green function.)

By the maximum principle \eqref{MaxPrinc} for initial boundary value problem for the transport equation on a domain with piecewise $C^1$ boundary, one has
$$
0\le f_\eps(t,z,C)\le\|f^{in}\|_{L^\infty(\bR^d\times\bS^{d-1})}\quad\text{ for a.e. }z\in\bR^d\times\bS^{d-1}\,,
$$
for all $t\ge 0$ if $C\notin\cN$. Thus, for all $t\ge 0$, one has
$$
0\le\bE\left(f_\eps(t,z,C)\indc_{z\in\overline{\cZ_\eps(C)}}\right)\le\|f^{in}\|_{L^\infty(\bR^d\times\bS^{d-1})}\quad\text{ for a.e. }z\in\bR^d\times\bS^{d-1}\,.
$$
By the Banach-Alaoglu theorem, let $F$ be a limit point of $\bE\left(f_\eps(t,z,C)\indc_{z\in\overline{\cZ_\eps(C)}}\right)$ in $L^\infty([0,\infty)\times\bR^d\times\bS^{d-1})$. Passing to the limit in the inequality above shows that
$$
\int_{\bR^d\times\bS^{d-1}}\phi(z)F(t,z)dz\ge\int_{\bR^d\times\bS^{d-1}}f^{in}(z)\int_{\bR^d\times\bS^{d-1}}\phi(\zeta)\Ga^{R,T}(t,z,d\zeta)dz\,,
$$
so that
\be\lb{F>}
F(t,\cdot)\ge\int_{\bR^d\times\bS^{d-1}}f^{in}(y)\Ga^{R,T}(t,y,\cdot)dy\,.
\ee
We shall prove the converse inequality by an elementary squeezing argument based on the conservation of total mass for the Lorentz kinetic equation.

\smallskip
Indeed, one has on the other hand
$$
\ba
\int_{\bR^d\times\bS^{d-1}}f_\eps(t,z;C)\indc_{z\in\overline{Z_\eps(C)}\times\bS^{d-1}}dz=&\int_{\bR^d\times\bS^{d-1}}f^{in}(z)\indc_{z\in\overline{Z_\eps(C)}\times\bS^{d-1}}dz
\\
\le&\int_{\bR^d\times\bS^{d-1}}f^{in}(z)dz\,,
\ea
$$
so that
$$
\int_{\bR^d\times\bS^{d-1}}\bE\left(f_\eps(t,z;C)\indc_{z\in\overline{Z_\eps(C)}\times\bS^{d-1}}\right)dz\le\int_{\bR^d\times\bS^{d-1}}f^{in}(z)dz\,,
$$
and therefore
\be\lb{intF<}
\int_{\bR^d\times\bS^{d-1}}F(t,z)dz\le\int_{\bR^d\times\bS^{d-1}}f^{in}(z)dz\,.
\ee
Since we have seen that
$$
\int_{\overline{B(0,R+T)}\times\bS^{d-1}}\Ga^{R,T}(t,y,d\zeta)=1
$$
for a.e. $(t,y)\in\cK(R,T)$ (which is precisely the conservation of total mass for \eqref{LinBoltzGa}), we conclude from \eqref{F>} and \eqref{intF<} that
$$
F(t,\cdot)=\int_{\bR^d\times\bS^{d-1}}f^{in}(y)\Ga^{R,T}(t,y,\cdot)dy\,.
$$
By compactness of $\bE\left(f_\eps(t,z,C)\indc_{z\in\overline{\cZ_\eps(C)}}\right)$ in $L^\infty([0,\infty)\times\bR^d\times\bS^{d-1})$ weak-* and uniqueness of its limit point as $\eps\to 0$, 
$$
\bE\left(f_\eps(t,z,C)\indc_{z\in\overline{\cZ_\eps(C)}}\right)\to F
$$
in $L^\infty([0,\infty)\times\bR^d\times\bS^{d-1})$ weak-* as $\eps\to 0$. 

\smallskip
Finally, by linearity of the equation in \eqref{LinBoltzGa}, we conclude that $F$ is the solution to the Cauchy problem for the linear Boltzmann equation
\be\lb{LinBoltzF2}
\left\{\ba
{}&(\d_t+v\cdot\grad_x+\si)F(t,x,v)=\tfrac{\si}{2|\bB^{d-1}|}\int_{\bS^{d-1}}F(t,x,v-2(v\cdot\nu)\nu)|v\cdot\nu|d\nu\,,
\\
&F(0,x,v)=f^{in}(x,v)\,,
\ea
\right.
\ee
which concludes the proof of Gallavotti's theorem.

%%%%%%%%%%%%%%%%%%%%%%%%%%%%%%%%%%%%%%%%%%%%%%%%%%%%%%%%%%%%%%%%%%%%%%%%%%%%%%%%%%%%%%%%%%%%%%%%%%%%%%%%%%%%%%%%%%%%%%%%%

\bigskip
\noindent
\textbf{Acknowledgements.}  I have had the opportunity of discussing on several occasions with Prof. M. Pulvirenti the problem of deriving kinetic equations from Newton's second law written for each particle, by working directly on 
the dynamical equations, and not on any explicit formula for their solutions. I am also very grateful to Prof. Y. Sone, who explained to me in detail his presentation of the derivation of the Boltzmann equation in Appendix A1 of his book 
\cite{Sone2007} --- and especially Grad's notion of ``one-sided molecular chaos'' condition.

%%%%%%%%%%%%%%%%%%%%%%%%%%%%%%%%%%%%%%%%%%%%%%%%%%%%%%%%%%%%%%%%%%%%%%%%%%%%%%%%%%%%%%%%%%%%%%%%%%%%%%%%%%%%%%%%%%%%%%%%%

\end{document}